\theoremstyle{plain}
\newtheorem{thm}{Theorem}
\theoremstyle{remark}
\DeclareMathOperator{\td}{d}
\newcommand{\bell}{\textup{B}}
\begin{document}

\title[An explicit formula for computing Bell numbers]
{An explicit formula for computing Bell numbers in terms of Lah and Stirling numbers}

\author[F. Qi]{Feng Qi}

\address{College of Mathematics, Inner Mongolia University for Nationalities, Tongliao City, Inner Mongolia Autonomous Region, 028043, China; Department of Mathematics, College of Science, Tianjin Polytechnic University, Tianjin City, 300387, China; Institute of Mathematics, Henan Polytechnic University, Jiaozuo City, Henan Province, 454010, China}
\email{\href{mailto: F. Qi <qifeng618@gmail.com>}{qifeng618@gmail.com}, \href{mailto: F. Qi <qifeng618@hotmail.com>}{qifeng618@hotmail.com}, \href{mailto: F. Qi <qifeng618@qq.com>}{qifeng618@qq.com}}
\urladdr{\url{http://qifeng618.wordpress.com}}

\begin{abstract}
In the paper, the author finds an explicit formula for computing Bell numbers in terms of Lah numbers and Stirling numbers of the second kind.
\end{abstract}

\keywords{explicit formula; Bell number; Lah number; Stirling number of the second kind; Kummer confluent hypergeometric function}

\subjclass[2010]{Primary 11B73; Secondary 11B75, 33B10, 33C15}

\thanks{This paper was typeset using \AmS-\LaTeX}

\maketitle

In combinatorics, Bell numbers, usually denoted by $\bell_n$ for $n\in\{0\}\cup\mathbb{N}$, count the number of ways a set with $n$ elements can be partitioned into disjoint and non-empty subsets. These numbers have been studied by mathematicians since the $19$th century, and their roots go back to medieval Japan, but they are named after Eric Temple Bell, who wrote about them in the $1930$s.
Every Bell number $\bell_n$ may be generated by
\begin{equation}\label{Bell-generate-function}
e^{e^x-1}=\sum_{k=0}^\infty\frac{\bell_k}{k!}x^k
\end{equation}
or, equivalently, by
\begin{equation}\label{Bell-generate-funct-2nd}
e^{e^{-x}-1}=\sum_{k=0}^\infty(-1)^k\bell_k\frac{x^k}{k!}.
\end{equation}
\par
In combinatorics, Stirling numbers arise in a variety of combinatorics problems. They are introduced in the eighteen century by James Stirling. There are two kinds of Stirling numbers: Stirling numbers of the first and second kinds.
Every Stirling number of the second kind, usually denoted by $S(n,k)$, is the number of ways of partitioning a set of $n$ elements into $k$ nonempty subsets, may be computed by
\begin{equation}\label{S(n-k)-explicit}
S(n,k)=\frac1{k!}\sum_{i=0}^k(-1)^i\binom{k}{i}(k-i)^n,
\end{equation}
and may be generated by
\begin{equation}\label{2stirling-gen-funct-exp}
\frac{(e^x-1)^k}{k!}=\sum_{n=k}^\infty S(n,k)\frac{x^n}{n!}, \quad k\in\{0\}\cup\mathbb{N}.
\end{equation}
\par
In combinatorics, Lah numbers, discovered by Ivo Lah in 1955 and usually denoted by $L(n,k)$, count the number of ways a set of $n$ elements can be partitioned into $k$ nonempty linearly ordered subsets and have an explicit formula
\begin{equation}\label{a-i-k-dfn}
L(n,k)=\binom{n-1}{k-1}\frac{n!}{k!}.
\end{equation}
Lah numbers $L(n,k)$ may also be interpreted as coefficients expressing rising factorials $(x)_n$ in terms of falling factorials $\langle x\rangle_n$, where
\begin{equation}
(x)_n=
\begin{cases}
x(x+1)(x+2)\dotsm(x+n-1), & n\ge1,\\
1, & n=0
\end{cases}
\end{equation}
and
\begin{equation}
\langle x\rangle_n=
\begin{cases}
x(x-1)(x-2)\dotsm(x-n+1), & n\ge1,\\
1,& n=0.
\end{cases}
\end{equation}
\par
In~\cite[Theorem~2]{exp-reciprocal-cm.tex} and its formally published paper~\cite[Theorem~2.2]{exp-reciprocal-cm-IJOPCM.tex}, the following explicit formula for computing the $n$-th derivative of the exponential function $e^{\pm1/t}$ was inductively obtained:
\begin{equation}\label{exp-frac1x-expans}
\bigl(e^{\pm1/t}\bigr)^{(n)}
=(-1)^n{e^{\pm1/t}}\sum_{k=1}^{n}(\pm1)^{k}L(n,k)\frac1{t^{n+k}}.
\end{equation}
The formula~\eqref{exp-frac1x-expans} have been applied in~\cite{notes-Stirl-No-JNT-rev.tex, Bessel-ineq-Dgree-CM.tex, QiBerg.tex, simp-exp-degree-revised.tex}.
\par
In combinatorics or number theory, it is common knowledge that Bell numbers $\bell_n$ may be computed in terms of Stirling numbers of the second kind $S(n,k)$ by
\begin{equation}\label{Bell-Stirling-eq}
\bell_n=\sum_{k=1}^nS(n,k).
\end{equation}
\par
In this paper, we will find a new explicit formula for computing Bell numbers $\bell_n$ in terms of Lah numbers $L(n,k)$ and Stirling numbers of the second kind $S(n,k)$.

\begin{thm}\label{Bell-Stirling-Lah-thm}
For $n\in\mathbb{N}$, Bell numbers $\bell_n$ may be computed in terms of Lah numbers $L(n,k)$ and Stirling numbers of the second kind $S(n,k)$ by
\begin{equation}\label{Bell-Stirling-Lah-eq}
\bell_n=\sum_{k=1}^n(-1)^{n-k}\Biggl[\sum_{\ell=1}^{k}L(k,\ell)\Biggr]S(n,k).
\end{equation}
\end{thm}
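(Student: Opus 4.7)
The plan is to prove~\eqref{Bell-Stirling-Lah-eq} by a generating-function manipulation that combines the explicit formula~\eqref{a-i-k-dfn}, the Stirling generating function~\eqref{2stirling-gen-funct-exp}, and the Bell generating function~\eqref{Bell-generate-funct-2nd}. The critical idea is to find a substitution that turns the Lah exponential $e^{y/(1-y)}$ into the Bell exponential $e^{e^{-x}-1}$; the right choice is $y=1-e^x$, since then $y/(1-y)=(1-e^x)/e^x=e^{-x}-1$.

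First, from~\eqref{a-i-k-dfn} together with the binomial identity $\sum_{n\ge\ell}\binom{n-1}{\ell-1}y^{n-\ell}=(1-y)^{-\ell}$, I would derive the column generating function $\sum_{n\ge\ell}L(n,\ell)\,y^n/n!=(y/(1-y))^{\ell}/\ell!$. Summing over $\ell\ge1$ then yields
\[
\sum_{n\ge1}\Biggl[\sum_{\ell=1}^{n}L(n,\ell)\Biggr]\frac{y^n}{n!}=e^{y/(1-y)}-1.
\]
Specializing at $y=1-e^x$, the right-hand side becomes $e^{e^{-x}-1}-1$, which by~\eqref{Bell-generate-funct-2nd} equals $\sum_{n\ge1}(-1)^n\bell_n\,x^n/n!$.

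For the left-hand side, I would write $(1-e^x)^k=(-1)^k(e^x-1)^k$ and expand each $(e^x-1)^k$ using~\eqref{2stirling-gen-funct-exp}. Interchanging the summation order produces
\[
\sum_{n\ge1}\frac{x^n}{n!}\sum_{k=1}^{n}(-1)^k\Biggl[\sum_{\ell=1}^{k}L(k,\ell)\Biggr]S(n,k).
\]
Equating coefficients of $x^n/n!$ with the Bell series above and multiplying by $(-1)^n$ then delivers~\eqref{Bell-Stirling-Lah-eq}. I do not anticipate any serious obstacle: once the substitution $y=1-e^x$ is in hand, the argument is purely mechanical, and that substitution is essentially forced by the requirement that $y/(1-y)$ match the exponent in~\eqref{Bell-generate-funct-2nd}.
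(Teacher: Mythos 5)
Your proposal is correct, and every step checks out: the column generating function $\sum_{n\ge\ell}L(n,\ell)y^n/n!=\frac{1}{\ell!}\bigl(\frac{y}{1-y}\bigr)^{\ell}$ follows at once from~\eqref{a-i-k-dfn} and the negative binomial series, the substitution $y=1-e^x$ does turn $e^{y/(1-y)}-1$ into $e^{e^{-x}-1}-1=\sum_{n\ge1}(-1)^n\bell_n x^n/n!$ (using $\bell_0=1$), and expanding $(1-e^x)^k=(-1)^k(e^x-1)^k$ via~\eqref{2stirling-gen-funct-exp} and comparing coefficients of $x^n/n!$ gives exactly~\eqref{Bell-Stirling-Lah-eq}. (A quick numerical check at $n=3$: $1-9+13=5=\bell_3$.) However, your route is genuinely different from the paper's. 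The paper never composes generating functions directly; instead it applies the Fa\`a di Bruno formula~\eqref{Bruno-Bell-Polynomial} with $f(u)=e^{1/u}$ and $h(x)=e^x$ to compute $\frac{\td^n}{\td x^n}e^{e^{-x}}$, feeding in the derivative formula~\eqref{exp-frac1x-expans} (which is where the Lah numbers enter) together with the homogeneity identity~\eqref{Bell(n-k)} and the evaluation~\eqref{Bell-stirling} of partial Bell polynomials at $(1,\dotsc,1)$ (which is where the Stirling numbers enter), and then lets $x\to0$. Your argument is more self-contained and elementary: it needs no Bell polynomials, no Fa\`a di Bruno, and no prior knowledge of~\eqref{exp-frac1x-expans}, only the binomial series and the two generating functions already displayed in the introduction. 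The paper's approach, in exchange, exhibits the formula as a direct consequence of the machinery of partial Bell polynomials and makes the role of~\eqref{exp-frac1x-expans} explicit. The only points you should make explicit when writing this up are the justification for interchanging the two summations (immediate, since $L(n,\ell)=0$ for $\ell>n$ and $S(n,k)=0$ for $k>n$, so everything is a legitimate composition of formal power series with $1-e^x$ having zero constant term; alternatively all series converge absolutely for $x$ near $0$) and the use of $\bell_0=1$ when you subtract $1$ from both sides.
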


\begin{proof}
In combinatorics, Bell polynomials of the second kind, or say, partial Bell polynomials, denoted by $\bell_{n,k}(x_1,x_2,\dotsc,x_{n-k+1})$ for $n\ge k\ge1$, are defined by
\begin{equation}
\bell_{n,k}(x_1,x_2,\dotsc,x_{n-k+1})=\sum_{\substack{1\le i\le n,\ell_i\in\mathbb{N}\\ \sum_{i=1}^ni\ell_i=n\\ \sum_{i=1}^n\ell_i=k}}\frac{n!}{\prod_{i=1}^{n-k+1}\ell_i!} \prod_{i=1}^{n-k+1}\Bigl(\frac{x_i}{i!}\Bigr)^{\ell_i}.
\end{equation}
See~\cite[p.~134, Theorem~A]{Comtet-Combinatorics-74}.
The famous Fa\`a di Bruno formula may be described in terms of Bell polynomials of the second kind $\bell_{n,k}(x_1,x_2,\dotsc,x_{n-k+1})$ by
\begin{equation}\label{Bruno-Bell-Polynomial}
\frac{\td^n}{\td t^n}f\circ h(t)=\sum_{k=1}^nf^{(k)}(h(t)) \bell_{n,k}\bigl(h'(t),h''(t),\dotsc,h^{(n-k+1)}(t)\bigr).
\end{equation}
See~\cite[p.~139, Theorem~C]{Comtet-Combinatorics-74}. Taking $f(u)=e^{1/u}$ and $h(x)=e^x$ in~\eqref{Bruno-Bell-Polynomial} and making use of~\eqref{exp-frac1x-expans} give
\begin{align*}
\frac{\td^ne^{e^{-x}}}{\td x^n}&=\sum_{k=1}^n\frac{\td^ke^{1/u}}{\td u^k}
\bell_{n,k}(\overbrace{e^x,e^x,\dotsc,e^x}^{n-k+1})\\
&=\sum_{k=1}^n(-1)^k{e^{1/u}}\sum_{\ell=1}^{k}L(k,\ell)\frac1{u^{k+\ell}} \bell_{n,k}(\overbrace{e^x,e^x,\dotsc,e^x}^{n-k+1})\\
&=e^{e^{-x}}\sum_{k=1}^n(-1)^k\sum_{\ell=1}^{k}L(k,\ell)\frac1{e^{(k+\ell)x}} \bell_{n,k}(\overbrace{e^x,e^x,\dotsc,e^x}^{n-k+1}).
\end{align*}
Further by virtue of
\begin{equation}\label{Bell(n-k)}
\bell_{n,k}\bigl(abx_1,ab^2x_2,\dotsc,ab^{n-k+1}x_{n-k+1}\bigr) =a^kb^n\bell_{n,k}(x_1,x_n,\dotsc,x_{n-k+1})
\end{equation}
and
\begin{equation}\label{Bell-stirling}
\bell_{n,k}\bigl(\overbrace{1,1,\dotsc,1}^{n-k+1}\bigr)=S(n,k)
\end{equation}
listed in~\cite[p.~135]{Comtet-Combinatorics-74}, where $a$ and $b$ are complex numbers, we obtain
\begin{align*}
\frac{\td^ne^{e^{-x}}}{\td x^n}
&=e^{e^{-x}}\sum_{k=1}^n(-1)^k\sum_{\ell=1}^{k}L(k,\ell)\frac1{e^{(k+\ell)x}} e^{kx} \bell_{n,k}(\overbrace{1,1,\dotsc,1}^{n-k+1})\\
&=e^{e^{-x}}\sum_{k=1}^n(-1)^k\sum_{\ell=1}^{k}L(k,\ell)\frac1{e^{\ell x}}S(n,k).
\end{align*}
Comparing this with the $n$-th derivative of the generating function~\eqref{Bell-generate-funct-2nd}
\begin{equation}
\frac{\td^ne^{e^{-x}-1}}{\td x^n}=\sum_{k=n}^\infty(-1)^k\bell_k\frac{x^{k-n}}{(k-n)!}
\end{equation}
yields
\begin{equation*}
e\sum_{k=n}^\infty(-1)^k\bell_k\frac{x^{k-n}}{(k-n)!}
=e^{e^{-x}}\sum_{k=1}^n(-1)^k\sum_{\ell=1}^{k}L(k,\ell)\frac1{e^{\ell x}}S(n,k).
\end{equation*}
Letting $x\to0$ in the above equation reveals
\begin{equation*}
(-1)^ne\bell_n=e\sum_{k=1}^n(-1)^k\sum_{\ell=1}^{k}L(k,\ell)S(n,k)
\end{equation*}
which may be rearranged as~\eqref{Bell-Stirling-Lah-eq}.
The proof of Theorem~\ref{Bell-Stirling-Lah-thm} is complete.
\end{proof}

\end{document}